\definecolor{red}{rgb}{1,0,0}
\newtheorem{theorem}{Theorem}
\newtheorem{corollary}[theorem]{Corollary}
\newtheorem{lemma}[theorem]{Lemma}
\title{Colorings v.s. list colorings of uniform hypergraphs\thanks{Supported by the
National Natural Science Foundation of China under Grant No.\,11471273 and 11561058.}}
\author
{
Wei Wang$^{\rm a,b}$,
Jianguo Qian$^{\rm a}$\thanks{Corresponding author: jgqian@xmu.edu.cn.},
Zhidan Yan$^{\rm b}$
\\
{\footnotesize$^{\rm a}$School of Mathematical Sciences, Xiamen University, Xiamen 361005, P. R. China}\\
{\footnotesize$^{\rm b}$College of Information Engineering, Tarim University, Alar 843300, P. R. China}
}
\date{}
\begin{document}
 \maketitle
\begin{abstract} Let $r$ be an integer with $r\ge 2$ and $G$ be a connected $r$-uniform hypergraph with $m$ edges. By refining the broken cycle theorem for hypergraphs, we show that if $k>\frac{m-1}{\ln(1+\sqrt{2})}\approx 1.135 (m-1)$ then the $k$-list assignment of $G$ admitting the fewest colorings is the constant list assignment. This extends the previous results of Donner, Thomassen and the current authors for graphs.\\

\noindent\textbf{Keywords:} uniform hypergraph; list coloring;  chromatic polynomial;  broken cycle
\end{abstract}
\section{Introduction}
\label{intro}
 For a positive integer $k$, a $k$-\emph{list assignment} of a graph or hypergraph $G=(V,E)$ is a mapping $L$ which assigns to each vertex $v$ a set $L(v)$ of $k$ permissible colors.  Given a $k$-list assignment $L$, an \emph{$L$-coloring} of $G$ is a vertex coloring in which the color of every vertex $v$ is chosen from its list $L(v)$ and every edge contains a pair of vertices with different colors. The notion of list coloring  was introduced independently by Vizing \cite{vizing1976} and by Erd\H{o}s, Rubin and Taylor \cite{erdos1979} initially for  ordinary graphs and then was extended to hypergraphs \cite{Alon2,Benzaken,Kahn,Ramamurthi,Saxton1,Saxton2}.

The list coloring for graphs has been extensively studied, much of the earlier fundamental work on which was surveyed in Alon \cite{Alon1}, Tuza  \cite{Tuza} and Kratochv\'{\i}l-Tuza-Voigt \cite{Kratochvil}. One direction of interests on list coloring focused on the evaluation or asymptotic behaviour of the \emph{list chromatic number} $\chi_l(G)$ (also called the \emph{choice number}), that is, the minimum $k$ such that $G$ has an $L$-coloring for any $k$-list assignment $L$. For a graph $G$, it is well known that  $\chi_l(G)$ can be much larger than the chromatic number  $\chi(G)$ \cite{erdos1979}. More specifically, in  \cite{Alon1} Alon showed that the list chromatic number of a graph grows with the average degree. However, this is not the case for hypergraphs. It was shown that, when $r\geq 3$, it is not true in general that the list chromatic number of  $r$-uniform hypergraphs grows with its average degree \cite{Alon2}.  Even so, it was also shown that similar property holds for many classes of hypergraphs \cite{Alon2,Haxell,Saxton2}, including all the simple uniform hypergraphs (here, a hypergraph is \emph{simple} if different edges have at most one vertex in
common) \cite{Saxton1}.

In this article we focus on other direction, that is, the number of list colorings in hypergraphs.  For a $k$-list assignment $L$ of a graph or hypergraph $G$, we use $P(G,L)$ to denote the number of $L$-colorings of $G$. Note that if  $L(v)=\{1,2,\ldots,k\}$  for every vertex $v\in V$, then an $L$-coloring is exactly an ordinary $k$-coloring and hence $P(G,L)$ agrees with the classic \emph{chromatic polynomial} $P(G,k)$. We call such $L$ a \emph{constant $k$-list assignment}. The \emph{list coloring function}, denoted by $P_l(G,k)$, is the minimum value of $P(G,L)$ over all $k$-list assignments $L$ of $G$, which was initially introduced  in \cite{kostochka1990,thomassen2009} for graphs.

For an ordinary graph $G$, Donner  in 1992 \cite{donner1992} showed that the list coloring function $P_l(G,k)$ equals  the chromatic polynomial $P(G,k)$ when $k$ is sufficiently large (compared to the number of vertices). Later in 2009, Thomassen \cite{thomassen2009} specified the `sufficiently large $k$' by `$k> |V|^{10}$'. Recently, the latter result was improved further to $k>\frac{m-1}{\ln(1+\sqrt{2})}$ by the present authors \cite{wangqianyan2017}, where $m$ is the number of edges. In contrast to ordinary graphs, there seems to be very few results on the number of the list colorings for hypergraphs. In this paper, we generalize the above result to $r$-uniform hypergraphs for any $r\ge 2$:

\begin{theorem} \label{main} For any connected $r$-uniform hypergraph $G$ with $m$ edges and $r\geq 2$,  if
\begin{equation} k>\frac{m-1}{\ln(1+\sqrt{2})}\approx 1.135 (m-1)
\end{equation}
then $P_l(G, k) = P(G, k)$ and the $k$-list assignment admitting the fewest colorings is  the constant $k$-list assignment.
\end{theorem}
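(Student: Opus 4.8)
The plan is to compare $P(G,L)$ and $P(G,k)$ through a common inclusion--exclusion expansion and then to show that the list version is never the smaller once $k$ exceeds the stated threshold. First I would expand both quantities over subsets of edges. For a $k$-list assignment $L$, calling an edge \emph{monochromatic} under a coloring if all its vertices receive the same color, inclusion--exclusion over the set of monochromatic edges gives
\begin{equation}
P(G,L)=\sum_{S\subseteq E}(-1)^{|S|}f(S,L),\qquad f(S,L)=\prod_{C}\Big|\bigcap_{v\in C}L(v)\Big|,
\end{equation}
where $C$ runs over the connected components of the spanning subhypergraph $(V,S)$: a component can be colored monochromatically in exactly $|\bigcap_{v\in C}L(v)|$ ways, and distinct components are independent. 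For the constant $k$-list assignment every intersection has size $k$, so $f(S,k)=k^{c(S)}$ with $c(S)$ the number of components, recovering $P(G,k)=\sum_S(-1)^{|S|}k^{c(S)}$.

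The structural heart is a refined broken cycle theorem for hypergraphs applied to \emph{both} expansions at once. Fixing a linear order on $E$ and defining a broken cycle as a cycle with its largest edge deleted, I would choose the hypergraph notion of cycle so that deleting the largest edge $e$ still leaves all $r$ vertices of $e$ in a single component of the remaining edges; this is the property the refinement must guarantee. Given this, if $S$ contains a broken cycle one selects a canonical such configuration and maps $S\mapsto S\mathbin{\triangle}\{e\}$, where $e$ is its largest edge. Because the vertices of $e$ are already mutually connected in $S$, toggling $e$ changes neither $c(S)$ nor any intersection $\bigcap_{v\in C}L(v)$, hence neither $f(S,L)$, while it flips $|S|\bmod 2$. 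Thus this is a sign-reversing, $f$- and $c$-preserving involution on the family of edge sets containing a broken cycle, and it cancels those terms \emph{simultaneously} in the list and the constant expansions. Both therefore reduce to sums over the family $\mathcal B$ of broken-cycle-free edge sets:
\begin{equation}
P(G,L)=\sum_{S\in\mathcal B}(-1)^{|S|}f(S,L),\qquad P(G,k)=\sum_{S\in\mathcal B}(-1)^{|S|}k^{c(S)}.
\end{equation}

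It then remains to prove $P(G,L)\ge P(G,k)$. Since every intersection has at most $k$ colors, $0\le f(S,L)\le k^{c(S)}$, so writing $d(S)=k^{c(S)}-f(S,L)\ge0$ (with $d(\emptyset)=0$) the desired inequality becomes
\begin{equation}
\sum_{\substack{S\in\mathcal B\\|S|\ \mathrm{odd}}}d(S)\ \ge\ \sum_{\substack{S\in\mathcal B\\|S|\ \mathrm{even}}}d(S).
\end{equation}
I would control the right-hand side through the enumeration of broken-cycle-free sets: for $S\neq\emptyset$ each $d(S)\le k^{c(S)}\le k^{\,n-|S|}$, which decays geometrically in $|S|$, while the number of $S\in\mathcal B$ with $|S|=i$ admits a sharp bound in terms of $m$ coming from the refined broken cycle count (for a connected host the largest edge of every cycle is excluded, and this is what beats the crude $\binom{m}{i}$ estimate). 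Summing the resulting one-variable series and comparing it against the leading $k^{n}$ term is exactly where the threshold enters: the series stays below the required value precisely when $(m-1)/k<\ln(1+\sqrt2)=\sinh^{-1}(1)$, the constant being the size at which the generating function for the broken-cycle-free bound (whose coefficients are the central-binomial/Catalan-type numbers appearing in the Taylor expansion of $\sinh^{-1}$) becomes critical. Once the strict inequality holds for every $L$, the constant assignment attains the minimum and $P_l(G,k)=P(G,k)$.

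The main obstacle I expect is twofold, and both parts lie in the refinement rather than in the bookkeeping. The first is pinning down the correct hypergraph notion of cycle and broken cycle so that the involution genuinely preserves the component partition: for $r\ge3$ an edge has $r$ vertices, and one must ensure that deleting the largest edge of a cycle keeps all of them connected, so that $f(S,L)=f(S\cup\{e\},L)$ holds with the full product of list intersections, not merely $c(S)=c(S\cup\{e\})$. The second is making the enumeration of $\mathcal B$ tight enough to yield the sharp constant $\ln(1+\sqrt2)$ rather than a weaker bound such as $\ln 2$; this is where the central-binomial estimate and the connectedness of $G$ (which fixes the relation between $n$, $m$ and the maximal attainable $c(S)$) must be exploited carefully.
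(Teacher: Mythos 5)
Your framework coincides with the paper's: expand both $P(G,L)$ and $P(G,k)$ by inclusion--exclusion over edge subsets, cancel down to broken-cycle-free sets via a hypergraph broken cycle theorem (the paper uses Trinks' theorem for $\delta$-cycles, whose hypothesis is exactly the sign-reversing condition you describe, and your structural requirement on deleting the largest edge is precisely the defining property of a $\delta$-cycle), and then compare the two alternating sums levelwise. The gap is in the quantitative step, and it is fatal as stated. You propose to bound each even-cardinality excess by $d(S)=k^{c(S)}-f(S,L)\le k^{c(S)}\le k^{n-|S|}$ and beat the total with the odd terms. But the only lower bound available on the odd side is the $|S|=1$ contribution, which equals exactly $k^{n-r}\alpha$, where $\alpha=\sum_{e\in E}\bigl(k-\bigl|\bigcap_{v\in e}L(v)\bigr|\bigr)$; and $\alpha$ can be as small as $1$ (take $L$ constant except that one vertex of degree one has a single color replaced). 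Your bound on the even side is not proportional to $\alpha$: already at $|S|=2$ it is of order $\binom{m}{2}k^{n-2}$, which for $r=2$ dwarfs $\alpha k^{n-2}$ whenever $\alpha$ is small, and no threshold of the form $k>C(m-1)$ can repair an inequality of the shape $\alpha k^{n-r}\ge\binom{m}{2}k^{n-2}$. The point is that the true excesses $d(S)$ do vanish as $L$ approaches the constant assignment, so any usable upper bound on them must scale with $\alpha$; the crude bound $k^{c(S)}$ throws that away.

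The missing machinery is the paper's Lemmas 7 and 8. First, for any connected subhypergraph $C=(W,F)$ one proves $\beta(C,L)=\bigl|\bigcap_{v\in W}L(v)\bigr|\ge k-\sum_{e\in F}\alpha(e,L)$ by induction along an edge ordering keeping initial segments connected, using $|A\cap B|\ge|A|+|B|-|A\cup B|\ge |A|+|B|-k$. Second, the Weierstrass-type inequality $\prod_{t=1}^{j}(k-a_t)\ge k^{j}-k^{j-1}\sum_t a_t$ converts this into $d(S)\le k^{c(S)-1}\sum_{e\in S}\alpha(e,L)$, so every term is now proportional to the total deficiency. Summing over size-$i$ sets, each fixed edge lies in at most $\binom{m-1}{i-1}$ of them --- this elementary count, not any exclusion of largest edges of cycles, is where $m-1$ enters --- and the refined component bound $c(V,S)\le n-i+2-r$ for $\delta$-cycle-free $S$ yields $f_i\le\alpha\binom{m-1}{i-1}k^{n-i+1-r}$, with equality $f_1=\alpha k^{n-r}$. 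Then $P(G,L)-P(G,k)\ge f_1-\sum_{i\ \mathrm{even}}f_i\ge\alpha k^{n-r}\bigl(1-\sum_{i\ \mathrm{odd}}\frac{1}{i!}\bigl(\frac{m-1}{k}\bigr)^{i}\bigr)\ge\alpha k^{n-r}\bigl(1-\sinh\bigl(\frac{m-1}{k}\bigr)\bigr)>0$ precisely when $\frac{m-1}{k}<\ln(1+\sqrt{2})=\sinh^{-1}(1)$. Your account of the constant (central-binomial or Catalan-type coefficients of $\sinh^{-1}$) is also not the actual mechanism: the constant arises simply because the odd part of the exponential series is $\sinh$, and $\sinh x<1$ iff $x<\ln(1+\sqrt{2})$. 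Finally, strictness (hence uniqueness of the minimizer) needs $\alpha>0$ for every non-constant $L$, which follows from connectedness of $G$; this should be stated rather than assumed.
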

To prove this theorem, we use the technique of the famous  Whitney's broken cycle theorem, by which the calculation of the chromatic polynomial of a graph $G$ can be restricted to the collection of those sets of edges which do not include any broken cycle as a subset. Whitney's broken cycle theorem was established initially for  graphs and has been extended to hypergraphs in various forms based on different ways to define a `cycle' for hypergraphs, see \cite{Dohmen01} for an example. Recently, Trinks \cite{trinks2014} gave a form for hypergraphs  by introducing a notion of `$\delta$-cycle'.

For ordinary graphs, the significance of the broken cycle theorem is that  the coefficient of $x^i$ in $P(G,k)$ is exactly the number of the sets consisting of $n-i$ edges  that contain no broken cycle. However, this is not the case for hypergraphs because two sets of edges that  do not include any broken cycle may induce different number of connected components, even if they have the same number of edges.

For this reason,  we give a refined version  of the broken cycle theorem. Our refinement is based on the form  given by Trinks in which the $\delta$-cycle provides a nice structure  for defining a broken cycle and, further, provides us an effective way to constrain the number of edges in the hypergraphs that contain no broken cycle. Indeed, the  $\delta$-cycle not only gives a nice structure for defining a `broken cycle' but, among others, gives a very nice understanding for what a cycle in a hypergraph should be.

Finally, we give a  parallel consequence of Theorem \ref{main} for improper list colorings of graphs.

\section{Broken Cycle Theorem for Hypergraphs}
In the following, all hypergraphs have no parallel edges, that is, no two edges are the same (when viewed as sets of vertices). For a set $V$ of vertices and $E$ of (hyper-) edges, we use $(V,E)$ to denote the hypergraph with vertex set $V$ and edge set $E$.

Let  $G=(V,E)$ be a hypergraph. $G$  is called \emph{trivial} if $E=\emptyset$ and is called $r$-\emph{uniform} if each edge consists of exactly $r$ vertices. We denote by $c(V,E)$ or, simply $c(G)$, the number of connected components of $G$.  A nonempty set $F\subseteq E$ is called a {\it $\delta$-cycle} of $G$ if
\begin{equation}\label{def}
c(V,F\setminus\{f\})=c(V,F)
\end{equation}
for each $f\in F$ and contains no proper nonempty subset satisfying this requirement (see \cite{trinks2014} for details). We note that in the case of graphs (2-uniform hypergraphs) the definition of $\delta$-cycles agrees with the definition of the usual cycles.

Let `<' be a fixed linear order  on the edge set $E$. A set $B\subseteq E$ is called a {\it broken cycle} (with respect to `<') if $B$ is obtained from a $\delta$-cycle by deleting its maximum edge. Define the set system
\begin{equation}
\mathcal{B}(G)=\{S\colon\, S\subseteq E~\text{and $S$ contains no broken cycle}\}.
\end{equation}
The following broken cycle theorem for hypergraphs was given by Trinks \cite{trinks2014}.
\begin{lemma}\cite{trinks2014}\label{restrinks}
\begin{equation}\label{forwhitney}
P(G,k)=\sum_{S\in \mathcal{B}(G)} (-1)^{|S|} k^{c(V,S)}.
\end{equation}
\end{lemma}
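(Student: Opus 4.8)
The plan is to derive the identity from the ordinary inclusion--exclusion expansion of the chromatic polynomial together with a sign--reversing involution that cancels exactly the sets containing a broken cycle. First I would record the unrestricted Whitney expansion $P(G,k)=\sum_{S\subseteq E}(-1)^{|S|}k^{c(V,S)}$. This holds for hypergraphs for the same reason as for graphs: writing $A_e$ for the set of $k$-colorings in which the edge $e$ is monochromatic, a proper coloring is precisely one avoiding every $A_e$, and by inclusion--exclusion $P(G,k)=\sum_{S\subseteq E}(-1)^{|S|}\bigl|\bigcap_{e\in S}A_e\bigr|$. A coloring lies in $\bigcap_{e\in S}A_e$ exactly when it is constant on every connected component of $(V,S)$, since forcing a hyperedge to be monochromatic forces its whole vertex set to one color; hence $\bigl|\bigcap_{e\in S}A_e\bigr|=k^{c(V,S)}$.

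It then suffices to show that $\sum_{S}(-1)^{|S|}k^{c(V,S)}=0$, where $S$ ranges over those subsets of $E$ that \emph{do} contain a broken cycle; subtracting this vanishing sum from the full expansion leaves exactly $\sum_{S\in\mathcal{B}(G)}(-1)^{|S|}k^{c(V,S)}$. To produce the cancellation I would build an involution on these sets. For such an $S$, let $X(S)$ be the set of edges $e$ that \emph{complete} a broken cycle inside $S$, that is, $e=\max C$ for some $\delta$-cycle $C$ with $C\setminus\{e\}\subseteq S$ (so $C\setminus\{e\}$ is a broken cycle contained in $S$). Since $S$ contains a broken cycle, $X(S)\neq\emptyset$; set $e^\ast=\min X(S)$ and define $\phi(S)=S\triangle\{e^\ast\}$, the set obtained by toggling $e^\ast$.

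Two facts drive the argument. First, $\phi$ preserves the number of components: if $C=B\cup\{e^\ast\}$ is the witnessing $\delta$-cycle with $B\subseteq S$, then the defining relation (\ref{def}) gives $c(V,B)=c(V,B\cup\{e^\ast\})$, which says that all vertices of $e^\ast$ already lie in a single component of $(V,B)$. Because lying in a common component is preserved under adding edges and $B\subseteq S\setminus\{e^\ast\}$, the vertices of $e^\ast$ lie in one component of both $(V,S)$ and $(V,S\setminus\{e^\ast\})$; thus toggling $e^\ast$ changes neither the component count nor $k^{c(V,S)}$, while it flips $|S|$ by one and so reverses the sign. Second, $\phi$ is a fixed-point-free involution, for which the crux is $\min X(\phi(S))=e^\ast$. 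This rests on the order-theoretic observation that any completing edge $f<e^\ast$ is, by definition, the maximum of its $\delta$-cycle, so that $\delta$-cycle avoids $e^\ast$ entirely; hence its broken cycle is a subset of $S$ if and only if it is a subset of $S\triangle\{e^\ast\}$, so $X(S)$ and $X(\phi(S))$ agree below $e^\ast$. Combined with the fact that the witness $B$ (which avoids $e^\ast$) keeps $e^\ast$ in $X(\phi(S))$, this yields $\min X(\phi(S))=e^\ast$ and therefore $\phi(\phi(S))=S$.

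I expect the delicate point to be this second fact, the verification that toggling $e^\ast$ neither admits a smaller completing edge into $X$ nor expels $e^\ast$ from it; everything hinges on the remark that a broken cycle whose completing edge lies below $e^\ast$ cannot involve $e^\ast$, so its presence is insensitive to the toggle. The component-preservation step is comparatively routine once one notes the monotonicity of the ``same component'' relation under edge addition, which is exactly where the $\delta$-cycle definition (\ref{def}) enters. Pairing every $S$ outside $\mathcal{B}(G)$ with $\phi(S)$ then cancels the entire unwanted part of the sum and leaves (\ref{forwhitney}).
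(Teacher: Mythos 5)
Your proof is correct. Both halves check out: the unrestricted expansion $P(G,k)=\sum_{S\subseteq E}(-1)^{|S|}k^{c(V,S)}$ follows by inclusion--exclusion exactly as you say (a coloring making every edge of $S$ monochromatic is constant on each component of $(V,S)$, so $\bigl|\bigcap_{e\in S}A_e\bigr|=k^{c(V,S)}$), and your involution genuinely cancels the sets containing a broken cycle: relation \eqref{def} applied to the witnessing $\delta$-cycle at its maximum edge $e^\ast$ shows the vertices of $e^\ast$ lie in one component of $(V,B)$, hence of $(V,S\setminus\{e^\ast\})$, so toggling $e^\ast$ preserves $c(V,\cdot)$ while flipping the sign; and since any completing edge $f<e^\ast$ is the maximum of a $\delta$-cycle all of whose edges are $\le f$, that cycle avoids $e^\ast$ and the presence of its broken cycle is insensitive to the toggle, giving $\min X(\phi(S))=e^\ast$ and hence $\phi\circ\phi=\mathrm{id}$ with no fixed points. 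As for comparison: the paper does not prove Lemma~\ref{restrinks} at all --- it cites Trinks --- and the only proof machinery it reproduces is Trinks's general cancellation lemma (Lemma~\ref{trinks1}), from which Lemma~\ref{restrinks} follows in one line by taking $f(S)=(-1)^{|S|}k^{c(V,S)}$ and $\mathcal{C}$ the set of all broken cycles; this is precisely the route the paper uses for the list analogue, Theorem~\ref{listtrinks}. Your argument is in substance a self-contained proof of that general lemma specialized to this $f$, and what it buys is transparency about exactly where the two features of $\delta$-cycles enter --- \eqref{def} at $f=e^\ast$ for component preservation, maximality of the deleted edge for the order-theoretic step --- together with the observation, implicit in your write-up, that the minimality clause in the definition of $\delta$-cycle is never used, so the identity holds for broken ``cycles'' derived from any edge family satisfying \eqref{def}; that is exactly the abstraction pursued in \cite{Dohmen01}.
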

In the case of graphs, if $S\in \mathcal{B}(G)$ then  $c(V,S)$ can be simplified to $|V|-|S|$ since $(V,S)$ is a forest. For $r$-uniform hypergraphs, we will show that $c(V,S)$ is bounded above by $|V|-|S|+2-r$, by which we will give a more applicable version of the broken cycle theorem for $P(G,k)$ and $P(G,L)$, respectively.
\begin{lemma}\label{boundE}
Let $r$ be an integer with $r\ge 2$ and $G=(V,E)$ be a nontrivial $r$-uniform  hypergraph. If $G$ contains no $\delta$-cycle then $G$ contains at most $|V|-r+1$ edges.
\end{lemma}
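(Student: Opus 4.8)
The plan is to convert the global hypothesis ``$G$ contains no $\delta$-cycle'' into a purely local statement about single-edge deletions, and then to run a greedy edge ordering in which every edge is ``connecting''. Throughout I would use the monotonicity of the component count: deleting an edge can only split components or leave them unchanged, so $c(V,F\setminus\{f\})\ge c(V,F)$ for every $F\subseteq E$ and every $f\in F$.

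First I would establish the following characterization: \emph{$G$ contains no $\delta$-cycle if and only if every nonempty $F\subseteq E$ has at least one edge $f$ with $c(V,F\setminus\{f\})>c(V,F)$.} A nonempty $F$ satisfies the defining equality (\ref{def}) for all $f\in F$ exactly when deleting any single edge of $F$ leaves the number of components unchanged. If some nonempty set has this property then, as the family of such sets is finite and nonempty, it contains an inclusion-minimal member, and that member is by definition a $\delta$-cycle; conversely every $\delta$-cycle has the property. Thus the absence of a $\delta$-cycle is equivalent to the failure of (\ref{def}) for \emph{all} nonempty $F$, and by monotonicity the failure $c(V,F\setminus\{f\})\ne c(V,F)$ forces $c(V,F\setminus\{f\})>c(V,F)$.

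Next I would use this to build a deletion order. Starting from $F=E$ and repeatedly removing an edge whose deletion strictly increases the number of components, I obtain an enumeration $g_1,g_2,\ldots,g_m$ of $E$ with $c(V,S_j)>c(V,S_{j-1})$, where $S_j=E\setminus\{g_1,\ldots,g_j\}$. Reading this backwards yields a build-up order $g_m,g_{m-1},\ldots,g_1$ in which every edge, when added, strictly decreases the number of components, i.e.\ decreases it by at least $1$. The extra gain comes from the very first edge $g_m$: it is added to the empty hypergraph $(V,\emptyset)$, whose $|V|$ vertices are all isolated, so its $r$ vertices merge into a single component and the count drops by exactly $r-1$.

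Finally I would sum the per-edge decreases. The total decrease over the whole build-up equals $c(V,\emptyset)-c(V,E)=|V|-c(V,E)$; since the first edge contributes $r-1$ and each of the remaining $m-1$ edges contributes at least $1$, I obtain $(r-1)+(m-1)\le |V|-c(V,E)\le |V|-1$, using $c(V,E)\ge 1$ for the nontrivial $G$, and hence $m\le |V|-r+1$. The only genuinely delicate point is the characterization in the first step, in particular justifying that the absence of a $\delta$-cycle is equivalent to every nonempty edge set having a ``connecting'' edge, together with the monotonicity of $c$ under deletion; once this is settled the counting is routine.
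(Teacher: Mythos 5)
Your proof is correct, but it takes a genuinely different route from the paper's. Both arguments hinge on the same key observation: the absence of a $\delta$-cycle forces every nonempty $F\subseteq E$ to contain an edge whose deletion strictly increases the number of components (via monotonicity of $c$ together with an inclusion-minimal-subset argument, which you spell out more carefully than the paper does). The paper then proceeds by induction on $|V|$: it picks one such edge $e^*$ in $E$, decomposes $(V,E\setminus\{e^*\})$ into its nontrivial components $G_1,\ldots,G_s$ and isolated vertices, applies the induction hypothesis $|E(G_i)|\le |V(G_i)|-r+1$ to each nontrivial component, and sums, finishing with a case analysis to rule out equality (the step noting that "the last two equalities cannot hold simultaneously"). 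You instead avoid induction entirely: you greedily delete connecting edges to obtain an ordering $g_1,\ldots,g_m$, reverse it into a build-up order, and amortize --- the first edge, added to $(V,\emptyset)$, drops the component count by exactly $r-1$, every later edge drops it by at least $1$, and the total drop telescopes to $|V|-c(V,E)\le |V|-1$, giving $(r-1)+(m-1)\le |V|-1$, i.e.\ $m\le |V|-r+1$. Your counting argument is arguably cleaner: it needs no component decomposition, no case distinction on whether $s=0$ or $t=0$, and it isolates exactly where the saving of $r-1$ comes from (the unique edge added to an edgeless hypergraph). The price is that you must invoke the connecting-edge property for every intermediate set $S_{j-1}$, not just for $E$ itself; this is legitimate because the $\delta$-cycle condition is intrinsic to an edge subset (it refers only to $c(V,F)$ and $c(V,F\setminus\{f\})$), so a $\delta$-cycle inside any $S_{j-1}$ would already be a $\delta$-cycle of $G$, and your characterization applies uniformly to all nonempty subsets of $E$.
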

\begin{proof}
Since $G$ is nontrivial, we have $|V|\ge r$. We prove the assertion by induction on $|V|$. If $|V|=r$ then $|E|=1$ since $G$ has no parallel edges. The assertion holds in this case. We now consider the case $|V|=n>r$.

 By the definition of $\delta$-cycle, if $c(V,E\setminus\{e\})=c(V,E)$ for each $e\in E$ then $G$ contains  a $\delta$-cycle, which contradicts the assumption that $G$ contains no $\delta$-cycle. This means that there exists an edge $e^*\in E$ such that
$c(V,E\setminus\{e^*\})\neq c(V,E)$ and, thus, $c(V,E\setminus\{e^*\})\ge 2$.

Let $G_1,G_2,\ldots,G_s$ be all the
nontrivial connected components and $G_{s+1},G_{s+2},\ldots, G_{s+t}$ be all the trivial connected components (isolated vertices) of $(V,E\setminus\{e^*\})$, where $s+t=c(V,E\setminus\{e^*\})\geq 2$. If $s=0$, then $G$ contains exactly one edge $e^*$ and hence, $|E|=1<|V|-r+1$. This means that the assertion holds in this case. We now assume that $s\geq 1$. Since $s+t\geq 2$, each $G_i$ with $i\in\{1,2,\ldots,s\}$ contains at most $|V|-1$ vertices. So by the induction hypothesis, $G_i$ contains at most $|V(G_i)|-r+1$ edges. Thus, we have
\begin{eqnarray*}
|E|&= &|E(G_1)|+|E(G_2)|+\cdots+|E(G_s)|+1\\
& \le & (|V(G_1)|-r+1)+(|V(G_2)|-r+1)+\cdots+(|V(G_s)|-r+1)+1\\
&=&(|V|-t)-(r-1)s+1\\
& \le & |V|-(r-1)s+1\\
& \le  & |V|-r+2.
 \end{eqnarray*}
Notice that the last two equalities can not hold simultaneously since otherwise $t=0$ and $s=1$, contradicting the fact that $s+t\ge 2$. Therefore, $|E|\le |V|-r+1$. The lemma follows by induction.
\end{proof}

\begin{corollary}\label{boundC}
Let $r$ be an integer with $r\ge 2$  and $G=(V,E)$ be a nontrivial $r$-uniform hypergraph with no $\delta$-cycle. Then,
\begin{equation}\label{lowup}
\max\{1,|V|-(r-1)|E|\}\le c(G)\le |V|-|E|+2-r,
\end{equation}
with both equalities holding if $r=2$ or $|E|=1$.
\end{corollary}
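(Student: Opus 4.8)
The plan is to establish the two bounds separately and then verify the equality cases; I expect the upper bound to be where the real work lies, since the lower bound is essentially a greedy counting argument.

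For the lower bound I would build $G$ by starting from the edgeless hypergraph on the vertex set $V$ (which has $|V|$ components) and inserting the $|E|$ edges one at a time. Since every edge meets exactly $r$ vertices, adding an edge can merge at most $r$ of the current components into one, so the component count drops by at most $r-1$ at each step. After all $|E|$ insertions this gives $c(G)\ge |V|-(r-1)|E|$. Combined with the trivial bound $c(G)\ge 1$ (as $G$ is nontrivial, $|V|\ge r\ge 2$), this yields $c(G)\ge \max\{1,\,|V|-(r-1)|E|\}$.

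For the upper bound I would pass to the connected components. Write $c(G)=a+b$, where $a$ is the number of nontrivial components $G_1,\dots,G_a$ (those containing an edge, with $|V(G_i)|=n_i$ and $|E(G_i)|=m_i$) and $b$ is the number of isolated vertices; note $a\ge 1$ since $G$ is nontrivial. Each $G_i$ is itself a nontrivial $r$-uniform hypergraph with no $\delta$-cycle, so Lemma~\ref{boundE} gives $m_i\le n_i-r+1$, that is, $n_i-m_i\ge r-1$. Using $|V|-|E|=\sum_{i=1}^{a}(n_i-m_i)+b$, the target inequality $a+b\le |V|-|E|+2-r$ reduces to $a\le \sum_{i=1}^{a}(n_i-m_i)+2-r$. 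The key estimate is then $\sum_{i=1}^{a}(n_i-m_i)+2-r\ge a(r-1)+2-r$, and the whole point is that $a(r-1)+2-r-a=(a-1)(r-2)\ge 0$ because $r\ge 2$ and $a\ge 1$. This is the crux of the argument and the main obstacle: one must see that the ``$+2-r$'' correction is exactly absorbed by this nonnegative quantity, so that a naive term-by-term summation of the per-component bounds is not enough — the deficit concentrated in a single component is what saves the constant.

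Finally I would check the equality cases. When $r=2$ the hypergraph is an ordinary graph whose lack of $\delta$-cycles makes it a forest, so $c(G)=|V|-|E|$, and both the lower bound $\max\{1,\,|V|-|E|\}$ and the upper bound $|V|-|E|+2-2$ collapse to this value. When $|E|=1$ the hypergraph is a single $r$-edge together with $|V|-r$ isolated vertices, so $c(G)=|V|-r+1$, and both bounds again equal $|V|-r+1$ (using $|V|\ge r$ to resolve the maximum). This completes the verification.
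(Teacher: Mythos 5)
Your proposal is correct and follows essentially the same route as the paper: the lower bound via adding edges one at a time (each insertion killing at most $r-1$ components), and the upper bound by decomposing $G$ into nontrivial components plus isolated vertices, applying Lemma~\ref{boundE} to each nontrivial component, and absorbing the $2-r$ correction through the observation $(a-1)(r-2)\ge 0$, which is algebraically identical to the paper's step $s(2-r)\le 2-r$ for $s\ge 1$. Your explicit verification of the equality cases is a small addition the paper leaves to the reader, but the argument is the same.
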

\begin{proof}
Since $c(G)\geq 1$, to prove  the left inequality of (\ref{lowup}), it suffices to prove that
\begin{equation}\label{low}
c(G)\ge |V|-(r-1)|E|.
\end{equation}
 When $|E|=1$ the assertion holds directly. Now let $|E|>1$. We notice that adding an edge to $G$ decreases  at most $r-1$ components. Therefore, (\ref{low})  follows by a simple induction on $|E|$.

Next we prove the right inequality of (\ref{lowup}). As $G$ is nontrivial, it has at least one nontrivial component. Let $c=c(G)$ and $G_1,G_2,\ldots,G_s$ be all nontrivial components of $G$. Then $|V(G_1)|+|V(G_2)|+\cdots+|V(G_s)|=|V|-(c-s)$. Combining this equality with Lemma \ref{boundE} leads to  \begin{eqnarray*}
|E|&= &|E(G_1)|+|E(G_2)|+\cdots+|E(G_s)|\\
   & \le & (|V(G_1)|-r+1)+|V(G_2)|-r+1)+\cdots+(|V(G_s)|-r+1)\\
   &\le  & (|V|-(c-s))+s(1-r)\\
   & =  & |V|-c+s(2-r),
 \end{eqnarray*}
implying  $c\le |V|-|E|+s(2-r)$. Since $2-r\le 0$ and $s\ge 1$, we have $s(2-r)\le 2-r$ and hence  $c\le |V|-|E|+2-r$, as desired.

One can easily verify that both equalities hold if $r=2$ or $|E|=1$. This completes the proof of the corollary.
\end{proof}

Let $G=(V,E)$ be an $r$-uniform hypergraph with $n$ vertices and let $S\in \mathcal{B}(G)$. By the definition of $\mathcal{B}(G)$, $S$ contains no broken cycle and hence, contains no $\delta$-cycle. Therefore, $|S|\le n-r+1$ by Lemma \ref{boundE}. We write
\begin{equation}
\mathcal{B}(G)=\mathcal{B}_0(G)\cup \mathcal{B}_1(G)\cup\cdots\cup\mathcal{B}_{n-r+1}(G),
\end{equation}
where
\begin{equation}
\mathcal{B}_i(G)=\{S\in \mathcal{B}(G)\colon\,|S|=i\}.
\end{equation}
By Eq. (\ref{def}) in the definition of the $\delta$-cycle, it is easy to see that each $\delta$-cycle contains at least three edges since $G$ has no parallel edges. This indicates that each broken cycle contains at least two edges. Therefore $\mathcal{B}_0(G)=\{\emptyset\}$ and $\mathcal{B}_1(G)=\{\{e\}\colon\,e\in E\}$.

Clearly, $c(V,\emptyset)=n$ and $c(V,\{e\})=n-r+1$. In other words,
\begin{equation}
c(V,S)=\begin{cases}
n,&S\in\mathcal{B}_0(G),\\
n-r+1,&
S\in\mathcal{B}_1(G).
\end{cases}
\end{equation}
Let $i\in\{1,2,\ldots,n-r+1\}$ and $S\in \mathcal{B}_i(G)$. By Corollary \ref{boundC} we have
$$\max\{1,n-(r-1)i\}\le c(V,S)\le n-i+2-r.$$
Thus, for each $i\in \{1,2,\ldots,n-r+1\}$ we can write
\begin{equation}
\mathcal{B}_i(G)=\mathcal{B}_i^\tau(G)\cup \mathcal{B}_i^{\tau+1}(G)\cup\cdots\cup\mathcal{B}_{i}^{n-i+2-r}(G),
\end{equation}
where $\tau=\max\{1,n-(r-1)i\}$ and
\begin{equation}\label{defbij}
\mathcal{B}_i^j(G)=\{S\in \mathcal{B}_i(G)\colon\,c(V,S)=j\},
\end{equation} for $j$ with $\tau\le j\le  n-i+2-r$.

Thus, the following refined version of Lemma \ref{restrinks} follows immediately.
\begin{theorem}\label{newtrinks}
\begin{equation}\label{newform}
P(G,k)=k^n+\sum_{i=1}^{n-r+1}(-1)^{i}\sum_{j=\tau}^{n-i+2-r}\sum_{S\in \mathcal{B}_i^j(G)}  k^{j},
\end{equation}
where $\tau=\max\{1,n-(r-1)i\}$.
\end{theorem}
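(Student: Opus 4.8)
The plan is to obtain Theorem~\ref{newtrinks} as a pure reorganization of the sum in Lemma~\ref{restrinks}, grouping the terms of Trinks' formula according to two statistics of each $S\in\mathcal{B}(G)$: its cardinality $|S|$ and its number of connected components $c(V,S)$. The point is that both the sign $(-1)^{|S|}$ and the power $k^{c(V,S)}$ appearing in Lemma~\ref{restrinks} are constant on the appropriate blocks of such a grouping, so the whole computation reduces to fixing the correct ranges of the two summation indices and checking that the grouping is a genuine partition.

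First I would split the outer sum by cardinality. Any $S\in\mathcal{B}(G)$ contains no broken cycle, hence no $\delta$-cycle, so Lemma~\ref{boundE} forces $|S|\le n-r+1$; since the blocks $\mathcal{B}_i(G)$ are pairwise disjoint by definition and their union is $\mathcal{B}(G)$, the sum in Lemma~\ref{restrinks} splits as $\sum_{i=0}^{n-r+1}\sum_{S\in\mathcal{B}_i(G)}(-1)^{|S|}k^{c(V,S)}$. The block $i=0$ consists of the single set $\emptyset$, and since $c(V,\emptyset)=n$ this contributes the isolated term $k^n$. For each $i\ge 1$ the sign $(-1)^{|S|}=(-1)^i$ is constant on $\mathcal{B}_i(G)$ and can be pulled out in front of the $i$-th block.

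Next I would refine each nonempty block by the value $j=c(V,S)$. For such an $S$, which is a nontrivial $r$-uniform hypergraph with no $\delta$-cycle, Corollary~\ref{boundC} bounds $c(V,S)$ between $\tau=\max\{1,n-(r-1)i\}$ and $n-i+2-r$; hence $\mathcal{B}_i(G)=\bigcup_{j=\tau}^{n-i+2-r}\mathcal{B}_i^j(G)$, again a disjoint union by the definition (\ref{defbij}). On each sub-block $\mathcal{B}_i^j(G)$ the exponent $c(V,S)=j$ is constant, so replacing $k^{c(V,S)}$ by $k^j$ turns the inner sum into $\sum_{j=\tau}^{n-i+2-r}\sum_{S\in\mathcal{B}_i^j(G)}k^j$. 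Assembling the isolated $i=0$ term with the factored $i\ge 1$ terms then reproduces exactly (\ref{newform}).

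There is no genuine analytic obstacle here: the statement is a bookkeeping identity, and the substantive work was already discharged in Lemma~\ref{boundE} and Corollary~\ref{boundC}. The only points demanding care are that the two decompositions $\mathcal{B}(G)=\bigcup_i\mathcal{B}_i(G)$ and $\mathcal{B}_i(G)=\bigcup_j\mathcal{B}_i^j(G)$ are genuine partitions, so that no term of the original sum is lost or double-counted, and that the stated index ranges are \emph{exactly} the support of the corresponding terms, i.e.\ that $\mathcal{B}_i^j(G)$ can be nonempty only for $\tau\le j\le n-i+2-r$. This last containment is precisely the content of Corollary~\ref{boundC}, so the identity follows immediately.
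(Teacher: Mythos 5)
Your proof is correct and takes essentially the same approach as the paper: both obtain the identity by partitioning $\mathcal{B}(G)$ first by cardinality (justified by Lemma~\ref{boundE}), isolating the $i=0$ block $\{\emptyset\}$ which contributes $k^n$, and then refining each block $\mathcal{B}_i(G)$ by the number of components $j=c(V,S)$ with the range $\tau\le j\le n-i+2-r$ supplied by Corollary~\ref{boundC}. The paper carries out exactly this bookkeeping in the paragraphs preceding the theorem and then declares the refined formula to ``follow immediately'' from Lemma~\ref{restrinks}, which is precisely your argument.
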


 Let $L$ be a $k$-list assignment of $G$ and let $C=(V_1,E_1)$ be a connected subgraph of $G$, where $V_1\subseteq V, E_1\subseteq E$. We define
\begin{equation}
\beta(C,L)=\Big|\bigcap_{v\in V_1}L(v)\Big|,
\end{equation}
that is,  $\beta(C,L)$ is the number of the common colors shared by  all the vertices of $C$.

For $S\subseteq E$, let
\begin{equation}\label{defoff}
f(S)=(-1)^{|S|}\prod_{t=1}^{c(V,S)}\beta(C_t^S,L),
\end{equation}
where $C_1^S,C_2^S,\ldots,C_{c(V,S)}^S$ are all the connected components (including the isolated vertices) of the subgraph $(V,S)$ of $G$.

By the inclusion-exclusion principle,
\begin{equation}\label{fromie}
P(G,L)=\sum_{S\subseteq E}f(S).
\end{equation}

Further, it can be seen that,  for all $S\subseteq E$ and all $e\in E\setminus S$,  it holds
\begin{equation}
c(V,S)=c(V,S\cup \{e\})\ \ \Rightarrow\ \ f(S)=-f(S\cup\{e\}).
\end{equation}

\begin{lemma} (Theorem 4, \cite{trinks2014}) \label{trinks1}
Let $G=(V,E)$ be a hypergraph with a linear order < on the edge set $E$,  $\mathcal{C}$ a  set of (not necessarily all) broken cycles of $G$ and let $f(S)$ be a function to an additive  group such that for all $S\subseteq E$ and all $e\in E\setminus S$ it holds
$$c(V,S)=c(V,S\cup \{e\})\ \ \Rightarrow\ \ f(S)=-f(S\cup\{e\}).$$
Then
\begin{equation}\label{fromthe4}
\sum_{S\subseteq E}f(S)=\sum _{S\in \mathcal{B_\mathcal{C}}(G)}f(S),
\end{equation}
where $\mathcal{B}_\mathcal{C}(G)=\{S\colon\,S\subseteq E \text{~and $S$ contains no broken cycle in~} \mathcal{C}\}$.
\end{lemma}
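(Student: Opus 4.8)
The plan is to deduce the identity by cancelling, in sign-reversing pairs, exactly those subsets of $E$ that contain a broken cycle from $\mathcal{C}$. Since $2^E$ is partitioned into $\mathcal{B}_\mathcal{C}(G)$ and its complement $\mathcal{D}=\{S\subseteq E\colon\,S\text{ contains some broken cycle in }\mathcal{C}\}$, it suffices to show $\sum_{S\in\mathcal{D}}f(S)=0$. I would establish this by induction on $|\mathcal{C}|$, removing one carefully chosen broken cycle at each step.

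The key preliminary observation, call it the \emph{redundancy property}, is the following: if $B\in\mathcal{C}$ arises from a $\delta$-cycle $F=B\cup\{g\}$ with $g=\max F$, and $B\subseteq S$ with $g\notin S$, then $c(V,S)=c(V,S\cup\{g\})$. Indeed, the definition of a $\delta$-cycle, Eq.~(\ref{def}) with $f=g$, gives $c(V,B)=c(V,B\cup\{g\})$, which says that all vertices incident to $g$ already lie in a single component of $(V,B)$; since passing from $B$ to $S\supseteq B$ only coarsens the component partition, those vertices still lie in one component of $(V,S)$, so adding $g$ merges nothing. Combined with the hypothesis on $f$, this yields $f(S)=-f(S\cup\{g\})$ whenever $B\subseteq S$ and $g\notin S$, and symmetrically $f(S)=-f(S\setminus\{g\})$ whenever $B\subseteq S$ and $g\in S$.

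For the induction, the base case $\mathcal{C}=\emptyset$ is immediate because then $\mathcal{B}_\mathcal{C}(G)=2^E$. For the inductive step I would let $B$ be the broken cycle of $\mathcal{C}$ whose maximum edge $g$ is \emph{largest} with respect to $<$, set $\mathcal{C}'=\mathcal{C}\setminus\{B\}$, and apply the inductive hypothesis to get $\sum_{S\subseteq E}f(S)=\sum_{S\in\mathcal{B}_{\mathcal{C}'}(G)}f(S)$. Since $\mathcal{B}_\mathcal{C}(G)=\{S\in\mathcal{B}_{\mathcal{C}'}(G)\colon\,B\not\subseteq S\}$, it remains to show that the discarded terms cancel, i.e. $\sum_{S\in\mathcal{B}_{\mathcal{C}'}(G),\,B\subseteq S}f(S)=0$. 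To this end I would toggle $g$, pairing each such $S$ with $S\triangle\{g\}$. This map is a fixed-point-free involution, and the redundancy property makes it sign-reversing, so the two-element orbits each contribute $0$.

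The one point requiring care, and the place I expect the main difficulty, is verifying that this pairing stays inside the index set $\{S\in\mathcal{B}_{\mathcal{C}'}(G)\colon\,B\subseteq S\}$; this is exactly where the maximality of $g$ is used. The containment $B\subseteq S\triangle\{g\}$ is harmless since $g\notin B$. Removing $g$ cannot create a broken cycle, so the case $g\in S$ is fine. The delicate case is $g\notin S$: adding $g$ could in principle produce a broken cycle $B'\in\mathcal{C}'$ with $B'\subseteq S\cup\{g\}$ but $B'\not\subseteq S$, and such a $B'$ would necessarily contain $g$, forcing $g<g'$, where $g'=\max F_{B'}$ is the edge deleted from the $\delta$-cycle $F_{B'}$ to form $B'$. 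But $g$ was chosen as the largest maximum edge over $\mathcal{C}\supseteq\mathcal{C}'$, so $g\ge g'$, a contradiction. Hence no new broken cycle from $\mathcal{C}'$ appears, $S\cup\{g\}$ remains in $\mathcal{B}_{\mathcal{C}'}(G)$, and the involution is well defined. This completes the cancellation and the induction, yielding (\ref{fromthe4}).
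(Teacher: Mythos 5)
Your proof is correct, but there is no in-paper argument to compare it against: the paper imports Lemma \ref{trinks1} verbatim as Theorem 4 of Trinks \cite{trinks2014} without proof. Judged on its own, your argument is a valid, self-contained proof, and it essentially reconstructs the standard Whitney-style mechanism used in the cited source: a sign-reversing involution that toggles the edge deleted from a $\delta$-cycle, plus a maximality choice that keeps the involution inside the relevant family. Both of your key ingredients check out. For the redundancy property, the defining equation (\ref{def}) applied to the edge $g$ of the $\delta$-cycle $F=B\cup\{g\}$ gives $c(V,B)=c(V,B\cup\{g\})$, which forces all vertices of $g$ into a single component of $(V,B)$; since the components of $(V,B)$ only coarsen when passing to $(V,S)$ with $S\supseteq B$, adding $g$ to $S$ merges nothing, and the hypothesis on $f$ then makes the toggle $S\mapsto S\triangle\{g\}$ sign-reversing. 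For closure, every edge of a broken cycle $B'\in\mathcal{C}'$ is strictly smaller than its deleted edge $g'$, so $g\in B'$ would force $g<g'$, contradicting the maximality of $g$ over $\mathcal{C}$; hence adding $g$ creates no new containment, $S\cup\{g\}$ stays in $\mathcal{B}_{\mathcal{C}'}(G)$, and $B\subseteq S\triangle\{g\}$ is trivially preserved because $g\notin B$. (One pedantic point: a broken cycle could conceivably arise from more than one $\delta$-cycle, so ``its'' maximum edge need not be unique; fixing one associated deleted edge for each member of $\mathcal{C}$ leaves your argument unaffected.) The only organizational difference from the usual presentation is that you run the cancellation as an induction on $|\mathcal{C}|$, peeling off the broken cycle with the largest deleted edge at each step, rather than as a single global involution keyed to a fixed ordering of all broken cycles (e.g.\ toggling the deleted edge of the first broken cycle contained in $S$); the two bookkeeping schemes are interchangeable, and yours has the modest advantage of localizing the delicate well-definedness check to a single distinguished edge per step.
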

For our purpose, we only consider the case that $\mathcal{C}$ consists of all broken cycles of $G$. Then $\mathcal{B}_\mathcal{C}(G)=\mathcal{B}(G)$ and (\ref {fromthe4}) becomes
\begin{equation}\label{fromthe4spe}
\sum_{S\subseteq E}f(S)=\sum _{S\in \mathcal{B}(G)}f(S).
\end{equation}
Combining (\ref{fromthe4spe})  with (\ref{defoff}) and (\ref{fromie}),  we have
\begin{equation}
P(G,L)=\sum_{S\in \mathcal{B}(G)}(-1)^{|S|}\prod_{t=1}^{c(V,S)}\beta(C_t^S,L).
\end{equation}

Moreover, by the same discussion as for Theorem \ref{newtrinks}, we get the following form of the broken cycle theorem for $P(G,L)$.
\begin{theorem}\label{listtrinks}
\begin{equation}\label{listnewform}
P(G,L)=k^n+\sum_{i=1}^{n-r+1}(-1)^{i}\sum_{j=\tau}^{n-i+2-r}\sum_{S\in \mathcal{B}_i^j(G)} \prod_{t=1}^{j} \beta(C_t^S,L),
\end{equation}
where $\tau=\max\{1,n-(r-1)i\}$ and $C_1^S,C_2^S,\ldots,C_{j}^S$ are all the components  of  $(V,S)$.
\end{theorem}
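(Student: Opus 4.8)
The plan is to start from the list identity already in hand, namely
\begin{equation*}
P(G,L)=\sum_{S\in \mathcal{B}(G)}(-1)^{|S|}\prod_{t=1}^{c(V,S)}\beta(C_t^S,L),
\end{equation*}
which was obtained by verifying that $f(S)=(-1)^{|S|}\prod_{t=1}^{c(V,S)}\beta(C_t^S,L)$ meets the hypothesis of Lemma \ref{trinks1} and combining (\ref{fromthe4spe}) with the inclusion-exclusion expansion (\ref{fromie}). From here the theorem is reached purely by reorganizing this single sum over $\mathcal{B}(G)$ along the same two-level stratification---by edge-count and then by component-count---used to pass from Lemma \ref{restrinks} to Theorem \ref{newtrinks}.

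First I would split $\mathcal{B}(G)=\bigcup_{i=0}^{n-r+1}\mathcal{B}_i(G)$ according to $|S|$; the top index $n-r+1$ is legitimate because each $S\in\mathcal{B}(G)$ is $\delta$-cycle-free and hence $|S|\le n-r+1$ by Lemma \ref{boundE} applied to $(V,S)$. The term $i=0$ is $\mathcal{B}_0(G)=\{\emptyset\}$: here $(V,\emptyset)$ consists of $n$ isolated vertices, each singleton component $\{v\}$ satisfies $\beta(\{v\},L)=|L(v)|=k$, and $(-1)^{|\emptyset|}=1$, so this term equals $k^n$, which I would extract as the leading summand.

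For each $i$ with $1\le i\le n-r+1$ I would then refine $\mathcal{B}_i(G)=\bigcup_{j=\tau}^{n-i+2-r}\mathcal{B}_i^j(G)$ with $\tau=\max\{1,n-(r-1)i\}$, the admissible range of $j$ being exactly the bound Corollary \ref{boundC} furnishes for nontrivial $\delta$-cycle-free subgraphs $(V,S)$ with $|S|=i$. On each stratum $\mathcal{B}_i^j(G)$ every $S$ has $|S|=i$ and $c(V,S)=j$, so $(-1)^{|S|}=(-1)^i$ factors out of the inner sum and $\prod_{t=1}^{c(V,S)}\beta(C_t^S,L)$ becomes $\prod_{t=1}^{j}\beta(C_t^S,L)$. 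Summing over $j$ and then over $i$ reproduces (\ref{listnewform}) verbatim.

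The reorganization itself is bookkeeping; the only point requiring care is that the range bounds apply to every relevant $S$. Concretely, I would check that Lemma \ref{boundE} and Corollary \ref{boundC}, stated for nontrivial hypergraphs, are invoked for the subgraph $(V,S)$ carrying the full vertex set $V$, so that any isolated vertices are counted in $|V|=n$; this is precisely what makes the limits $i\le n-r+1$ and $\tau\le j\le n-i+2-r$ correct. Since the genuine analytic content was already discharged in establishing the preliminary identity, this is the main---and only minor---obstacle.
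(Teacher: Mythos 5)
Your proposal is correct and follows essentially the same route as the paper: it starts from the identity $P(G,L)=\sum_{S\in \mathcal{B}(G)}(-1)^{|S|}\prod_{t=1}^{c(V,S)}\beta(C_t^S,L)$ obtained via inclusion-exclusion and Lemma \ref{trinks1}, and then performs exactly the stratification of $\mathcal{B}(G)$ by edge-count (Lemma \ref{boundE}) and component-count (Corollary \ref{boundC}) that the paper invokes with the phrase ``by the same discussion as for Theorem \ref{newtrinks}.'' Your explicit treatment of the $i=0$ term and of the fact that the bounds are applied to $(V,S)$ with the full vertex set merely spells out details the paper leaves implicit.
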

\section{Proof of Theorem \ref{main}}
Let $G=(V,E)$ be a connected $r$-uniform hypergraph with $n$ vertices and $m$ edges and let $L$ be a $k$-list assignment of $G$. For an edge $e\in E$, we denote by $V(e)$ the set of the vertices in $e$ and let
\begin{equation}\label{defa}
\alpha(e,L)=k-\Big|\bigcap_{v\in V(e)} L(v)\Big|.
\end{equation}
For $F\subseteq E$,  denote $V(F)=\bigcup_{e\in F}V(e)$.
\begin{lemma}\label{resbeta}
Let $W\subseteq V$ and $F\subseteq E$. If the hypergraph $C=(W,F)$ is connected then
\begin{equation}\label{inebeta}
k\ge \beta(C,L)\ge k-\sum_{e\in F}\alpha(e,L),
\end{equation}
where the right equality holds if $|W|=1$ or $|W|=r$.
\end{lemma}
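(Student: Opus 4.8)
The plan is to settle the upper bound instantly and prove the lower bound by a telescoping argument along a connected ordering of the edges of $F$. The upper bound is immediate: $C$ being connected is nonempty, so choosing any $v_0\in W$ gives $\beta(C,L)=\bigl|\bigcap_{v\in W}L(v)\bigr|\le|L(v_0)|=k$. For the lower bound I would first record, for each $e\in F$, the set $I_e=\bigcap_{v\in V(e)}L(v)$ of colors common to $e$, so that $|I_e|=k-\alpha(e,L)$ by (\ref{defa}). If $|W|=1$ then $F=\emptyset$ (no edge fits on one vertex, as $r\ge 2$) and both sides of (\ref{inebeta}) equal $k$; so assume $|W|\ge 2$. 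Connectivity then forces every vertex of $W$ onto some edge, i.e.\ $W=\bigcup_{e\in F}V(e)$, and therefore $\beta(C,L)=\bigl|\bigcap_{e\in F}I_e\bigr|$.

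Next I would enumerate $F=\{e_1,\dots,e_p\}$ in a ``building-up'' order for which each partial hypergraph $(W_i,\{e_1,\dots,e_i\})$ is connected, where $W_i=V(e_1)\cup\cdots\cup V(e_i)$; such an order exists because $C$ is connected (for instance one produced by a breadth-first search on the edges). Writing $J_i=\bigcap_{t\le i}I_{e_t}=\bigcap_{v\in W_i}L(v)$, one has $|J_1|=k-\alpha(e_1,L)$ and $J_p=\bigcap_{e\in F}I_e$, so the lower bound follows once I establish the telescoping estimate $|J_{i+1}|\ge|J_i|-\alpha(e_{i+1},L)$ and sum it. The key point is a localization: since the partial hypergraph stays connected, $e_{i+1}$ meets $W_i$ in some vertex $u$, and then both $J_i\subseteq L(u)$ (because $u\in W_i$) and $I_{e_{i+1}}\subseteq L(u)$ (because $u\in V(e_{i+1})$); hence $J_i\setminus I_{e_{i+1}}\subseteq L(u)\setminus I_{e_{i+1}}$, whose size is $k-|I_{e_{i+1}}|=\alpha(e_{i+1},L)$. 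This yields $|J_{i+1}|=|J_i|-|J_i\setminus I_{e_{i+1}}|\ge|J_i|-\alpha(e_{i+1},L)$, and summing over $i$ gives $\beta(C,L)=|J_p|\ge k-\sum_{e\in F}\alpha(e,L)$.

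For the equality claim, the case $|W|=1$ was already noted. When $|W|=r$, $r$-uniformity together with the no-parallel-edges convention forces $F$ to be a single edge $e$ with $V(e)=W$, so $\beta(C,L)=|I_e|=k-\alpha(e,L)$, which is exactly the right-hand side of (\ref{inebeta}). The main obstacle---and really the whole content of the proof---is engineering the localization: one must pick the connected edge ordering and, at each step, exhibit a vertex $u$ that places both $J_i$ and $I_{e_{i+1}}$ inside a single list $L(u)$, which is what turns a lossy global union bound into the sharp edge-by-edge loss of at most $\alpha(e_{i+1},L)$ colors. Once this is arranged, the remaining induction is routine.
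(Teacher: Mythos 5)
Your proof is correct and takes essentially the same approach as the paper's: both arguments order the edges of $F$ so that each partial hypergraph is connected, and then use a vertex $u$ shared by the new edge and the previously covered vertex set to show that adding an edge $e$ removes at most $\alpha(e,L)$ common colors. The only cosmetic difference is that you telescope the estimate $|J_{i+1}|\ge|J_i|-\alpha(e_{i+1},L)$ directly via $J_i\setminus I_{e_{i+1}}\subseteq L(u)\setminus I_{e_{i+1}}$, whereas the paper packages the identical step as an induction on $|W|+|F|$ using $|A\cap B|=|A|+|B|-|A\cup B|$ together with $|A\cup B|\le|L(u)|=k$.
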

\begin{proof}
As $|L(v)|=k$ for each $v\in W$, the left inequality trivially holds.
Let $p=|W|$ and $q=|F|$. We prove the right inequality by induction on $p+q$. Since $C$ is connected, we have $p=1$ or $p\ge r$. If $p=1$ or $p=r$ then the right `$\geq$' becomes `$=$' and the assertion clearly holds.

Now consider the case that $p+q>r+1$ (and hence $q\ge 2$). Since $(W,F)$ is connected, we may label the edges in $F$ as $e_1,e_2,\ldots,e_{q}$ so that the hypergraph $C_i=(V(F_i),F_i)$ is connected for each  $i\in \{1,2,\ldots,q\}$, where $F_i=\{e_1,e_2,\ldots,e_{i}\}$. This implies that
\begin{equation}
V(F_{i-1})\cap V(e_i)\neq \emptyset, ~\text{for}~2\le i\le q.
\end{equation}
Choose an arbitrary $u\in V(F_{q-1})\cap V(e_{q})$. We have
 \begin{equation}
\bigcap_{v\in V(F_{q-1})}L(v)\subseteq L(u)~~~\text{and}~\bigcap_{v\in V(e_{q})}\subseteq L(u),
\end{equation}
and hence
 \begin{equation}\label{cuplessk}
 \Big|\Big(\bigcap_{v\in V(F_{q-1})}L(v)\Big)\cup\Big(\bigcap_{v\in V(e_q)}L(v)\Big)\Big|\le |L(u)|= k.
 \end{equation}
By (\ref{cuplessk}) and the simple formula $|A\cap B|=|A|+|B|-|A\cup B|$, we obtain
\begin{eqnarray*}
\beta(C,L)& =& \Big|\bigcap_{v\in W}L(v)\Big|\\
& =&  \Big|\Big(\bigcap_{v\in V(F_{q-1})}L(v)\Big)\cap\Big(\bigcap_{v\in V(e_{q})}L(v)\Big)\Big|\\
&\ge &  \Big|\bigcap_{v\in V(F_{q-1})}L(v)\Big|+\Big|\bigcap_{v\in V(e_{q})}L(v)\Big|-k\\
&  =  & \beta(C_{q-1},L)-\alpha(e_{q},L)\\
&\ge & \Big( k-\sum_{i=1}^{q-1}\alpha(e_i,L)\Big)-\alpha(e_{q},L)\\
&  =  &k-\sum_{e\in F}\alpha(e,L),
 \end{eqnarray*}
where the last inequality follows by the induction hypothesis. This completes the proof of the lemma.
\end{proof}
We now estimate the difference between
$$\sum_{j=\tau}^{n-i+2-r}\sum_{S\in \mathcal{B}_i^j(G)}  k^{j}$$
 in (\ref{newform}) and
$$\sum_{j=\tau}^{n-i+2-r}\sum_{S\in \mathcal{B}_i^j(G)} \prod_{t=1}^{j} \beta(C_t^S,L)$$
in (\ref{listnewform}). To this end, we need the following inequality, which is obtained from the Weierstrass inequality:  $\prod_{i=1}^s(1-a_i)\ge 1-\sum_{i=1}^s a_i$, where $0\leq a_i\leq 1$ for every $i=1,2,\ldots,s$ \cite{kuang2010}.
\begin{lemma}\label{ressimple}
Let $t$ be a positive integer and let $a_1,a_2,\ldots,a_s$ be $s$ real numbers in $[0,t]$. Then
\begin{equation}\label{inesimple}
\prod_{i=1}^s(t-a_i)\ge t^s-t^{s-1}\sum_{i=1}^s a_i
\end{equation}
and the equality holds if at most one of these $a_i$'s is positive.
\end{lemma}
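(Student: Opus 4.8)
The plan is to reduce the stated inequality to the Weierstrass inequality $\prod_{i=1}^s(1-b_i)\ge 1-\sum_{i=1}^s b_i$ (valid for $b_i\in[0,1]$) by a single rescaling, using that $t$ is positive. First I would normalize each variable by setting $b_i=a_i/t$; since $a_i\in[0,t]$ and $t>0$, each $b_i$ lies in $[0,1]$, so the Weierstrass inequality \cite{kuang2010} applies to the $b_i$. Then I would factor $t$ out of every factor on the left, writing $t-a_i=t(1-b_i)$, so that
\[
\prod_{i=1}^s(t-a_i)=t^s\prod_{i=1}^s(1-b_i)\ge t^s\Big(1-\sum_{i=1}^s b_i\Big)=t^s-t^{s-1}\sum_{i=1}^s a_i,
\]
which is exactly (\ref{inesimple}). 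This is the whole content of the main estimate.

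For the equality clause I would simply specialize. If at most one of the $a_i$ is positive, reorder so that $a_2=\cdots=a_s=0$; then the left side collapses to $(t-a_1)\,t^{s-1}=t^s-a_1 t^{s-1}$, which coincides with the right side $t^s-t^{s-1}a_1$. Hence equality holds, and nothing beyond a direct substitution is required.

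The only point that genuinely needs care is verifying that the hypotheses of the Weierstrass inequality survive the rescaling, namely that $b_i\in[0,1]$, and noting that the integrality of $t$ plays no role: what matters is $t>0$, which both legitimizes the division and keeps the factor $t^s$ positive so that the direction of the inequality is preserved (in the application $t=k$ will be a positive integer, which is why it is phrased that way). Should a self-contained argument be preferred over invoking Weierstrass, the same bound follows by an easy induction on $s$: multiplying the inductive hypothesis for $s-1$ by the nonnegative factor $(t-a_s)$ and discarding the nonnegative cross term $a_s t^{s-2}\sum_{i<s}a_i$ yields (\ref{inesimple}), and tracking when that cross term vanishes recovers the equality condition. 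I do not anticipate any real obstacle here; the lemma is essentially a homogenized form of a known inequality.
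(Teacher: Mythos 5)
Your proof is correct and follows exactly the route the paper intends: the paper states the lemma as an immediate consequence of the Weierstrass inequality $\prod_{i=1}^s(1-a_i)\ge 1-\sum_{i=1}^s a_i$ for $a_i\in[0,1]$, and your rescaling $b_i=a_i/t$ together with factoring out $t^s$ is precisely that reduction, with the equality clause verified by direct substitution just as one would expect. No gap; the only difference is that you spell out the homogenization the paper leaves implicit.
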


\begin{lemma}\label{resineBi}
For $1\le i\le n-r+1$,
\begin{equation}\label{ineBi}
0\le  \sum_{j=\tau}^{n-i+2-r}\sum_{S\in \mathcal{B}_i^j(G)}  k^{j}-\sum_{j=\tau}^{n-i+2-r}\sum_{S\in \mathcal{B}_i^j(G)} \prod_{t=1}^{j} \beta(C_t^S,L)\le k^{n-i+1-r}\binom{m-1}{i-1}\sum_{e\in E}\alpha(e,L),
\end{equation}
and the right equality holds if $i=1$.
\end{lemma}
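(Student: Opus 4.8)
The plan is to prove both inequalities termwise: I would bound the contribution of each individual $S\in\mathcal{B}_i(G)$ and then sum, recalling that $\bigcup_{j}\mathcal{B}_i^j(G)=\mathcal{B}_i(G)$ ranges over all $i$-element members of $\mathcal{B}(G)$. For the left inequality, the left `$\ge$' in Lemma \ref{resbeta} gives $\beta(C_t^S,L)\le k$ for every component, so $\prod_{t=1}^{j}\beta(C_t^S,L)\le k^{j}$ for each $S$; summing over all $j$ and all $S\in\mathcal{B}_i^j(G)$ yields the nonnegativity of the whole difference.

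For the right inequality I would fix $S\in\mathcal{B}_i^j(G)$ and set $b_t:=k-\beta(C_t^S,L)$. Since $0\le\beta(C_t^S,L)\le k$, each $b_t\in[0,k]$, so Lemma \ref{ressimple}, applied with its integer parameter equal to $k$ and its $s=j$ reals taken to be $b_1,\ldots,b_j$, gives $\prod_{t=1}^{j}(k-b_t)\ge k^{j}-k^{j-1}\sum_{t=1}^{j}b_t$, that is
\begin{equation*}
k^{j}-\prod_{t=1}^{j}\beta(C_t^S,L)\le k^{j-1}\sum_{t=1}^{j}b_t.
\end{equation*}
The right `$\ge$' of Lemma \ref{resbeta} bounds each $b_t$ by $\sum_{e\in E(C_t^S)}\alpha(e,L)$; since the components of $(V,S)$ partition the edge set $S$ (isolated vertices contributing $b_t=0$), summing over $t$ gives $\sum_{t=1}^{j}b_t\le\sum_{e\in S}\alpha(e,L)$. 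Because $j=c(V,S)\le n-i+2-r$ by Corollary \ref{boundC} and $k\ge 1$, I may replace the variable factor $k^{j-1}$ by the uniform factor $k^{n-i+1-r}$, arriving at the per-$S$ estimate $k^{j}-\prod_{t}\beta(C_t^S,L)\le k^{n-i+1-r}\sum_{e\in S}\alpha(e,L)$.

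It then remains to sum this estimate over $S\in\mathcal{B}_i(G)$ and to control the double sum $\sum_{S\in\mathcal{B}_i(G)}\sum_{e\in S}\alpha(e,L)$. I would exchange the order of summation to rewrite it as $\sum_{e\in E}\alpha(e,L)\,\bigl|\{S\in\mathcal{B}_i(G):e\in S\}\bigr|$ and then observe that $\mathcal{B}_i(G)$ is a family of $i$-element subsets of the $m$-element set $E$, so the number of its members containing a fixed edge $e$ is at most $\binom{m-1}{i-1}$; this delivers exactly the claimed upper bound. For the equality case $i=1$ I would check that every step is tight: $\mathcal{B}_1(G)=\{\{e\}:e\in E\}$, each single edge forms one nontrivial component on $r$ vertices together with $n-r$ isolated vertices, the `$|W|=r$' equality case of Lemma \ref{resbeta} and the single-positive-term equality case of Lemma \ref{ressimple} both apply, and each edge lies in precisely one member of $\mathcal{B}_1(G)$, matching $\binom{m-1}{0}=1$.

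The termwise inequality and the counting are routine; the only point demanding care is the exponent bookkeeping, namely confirming that $j-1\le n-i+1-r$ for every $S$ that occurs, which is exactly the upper bound $c(V,S)\le n-i+2-r$ of Corollary \ref{boundC}, so that collapsing $k^{j-1}$ to the constant $k^{n-i+1-r}$ never reverses the inequality. I expect no genuine obstacle beyond keeping the two lemma applications and the partition-of-edges identity correctly aligned.
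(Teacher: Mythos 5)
Your proposal is correct and follows essentially the same route as the paper: a per-$S$ application of Lemma \ref{ressimple} combined with the bound from Lemma \ref{resbeta}, the exponent collapse $k^{j-1}\le k^{n-i+1-r}$ justified by Corollary \ref{boundC}, an exchange of summation, the count that at most $\binom{m-1}{i-1}$ $i$-subsets of $E$ contain a fixed edge, and the same tightness check at $i=1$. The only (harmless) differences are that you apply Lemma \ref{ressimple} directly to $a_t=k-\beta(C_t^S,L)\in[0,k]$, which neatly avoids the paper's truncation $\min\{k,\sum_{e\in E(C_t^S)}\alpha(e,L)\}$, and that you count within $\mathcal{B}_i(G)$ directly instead of passing through the paper's auxiliary families $\mathcal{E}_i^j(G)$ and $\mathcal{E}_{i,e}^j(G)$.
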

\begin{proof}
Since $k\ge \beta(C_t^S,L)$, the left inequality in (\ref{ineBi}) is obvious.

As $\beta(C_t^S,L)\ge 0$, the right inequality in (\ref{inebeta}) implies
\begin{equation}\label{betabound}
\beta(C_t^S,L)\ge k-\min\Big\{k,\sum_{e\in E(C_t^S)} \alpha(e,L)\Big\}.
\end{equation}
Thus, by Lemma \ref{ressimple},
\begin{eqnarray}\label{diff}
k^{j}-\prod_{t=1}^{j}\beta(C_t^S,L)&\le& k^{j}-\prod_{t=1}^{j}\bigg(k-\min\bigg\{k,\sum_{e\in E(C_t^S)}\alpha(e,L)\bigg\}\bigg)\nonumber\\
& \le &  k^{j-1}\sum_{t=1}^{j}\min\bigg\{k,\sum_{e\in E(C_t^S)}\alpha(e,L)\bigg\}\nonumber\\
&\le &   k^{j-1}\sum_{t=1}^{j}\sum_{e\in E(C_t^S)}\alpha(e,L)\nonumber\\
&=& k^{j-1}\sum_{e\in S}\alpha(e,L).
 \end{eqnarray}
For $j$ with $\tau\le j\le n-i+2-r$, let
\begin{equation}
\mathcal{E}_i^j(G)=\{S\colon\,S\subseteq E, |S|=i\text{~and~} c(V,S)=j\}
\end{equation}
and for each $e\in E$, let
\begin{equation}
\mathcal{E}_{i,e}^j(G)=\{S\colon\,S\in \mathcal{E}_i^j(G)\text{~and~} e\in S \}.
\end{equation}
By (\ref{defbij}), it is clear that
\begin{equation}\label{BE}
\mathcal{B}_i^j(G)\subseteq \mathcal{E}_i^j(G).
\end{equation}
Therefore, by (\ref{diff}),
\begin{eqnarray}\label{diffsum}
\sum_{S\in \mathcal{B}_i^j(G)}k^{j}-\sum_{S\in \mathcal{B}_i^j(G)}\prod_{t=1}^{j}\beta(C_t^S,L)&\le& \sum_{S\in \mathcal{B}_i^j(G)} k^{j-1}\sum_{e\in S}\alpha(e,L)\nonumber\\
 &\le&k^{j-1} \sum_{S\in \mathcal{E}_i^j(G)} \sum_{e\in S}\alpha(e,L)\nonumber\\
  &=&k^{j-1}  \sum_{e\in E} \sum_{S\in \mathcal{E}_{i,e}^j(G)}\alpha(e,L)\nonumber\\
  &=&k^{j-1} \sum_{e\in E}|\mathcal{E}_{i,e}^j(G)|\alpha(e,L)
  \end{eqnarray}
Let
$$ \mathcal{E}_{i,e}(G)=\{S\colon\,S\subseteq E,|S|=i\text{~and~}e\in S\}.$$
It is easy to see $$|\mathcal{E}_{i,e}(G)|=\binom{m-1}{i-1}.$$ From the definition of  $\mathcal{E}_{i,e}^j(G)$, we notice that $\mathcal{E}_{i,e}^\tau(G),\ldots,\mathcal{E}_{i,e}^{n-i+2-r}(G)$ are the pairwise disjoint subsets of $\mathcal{E}_{i,e}(G)$. Therefore,
\begin{equation}
\sum_{j=\tau}^{n-i+2-r}|\mathcal{E}_{i,e}^j|\le |\mathcal{E}_{i,e}(G)|= \binom{m-1}{i-1}.
\end{equation}
Together with (\ref{diffsum}), we have
\begin{eqnarray}\label{diffsumsum}
 \sum_{j=\tau}^{n-i+2-r}\sum_{S\in \mathcal{B}_i^j(G)}  k^{j}-\sum_{j=\tau}^{n-i+2-r}\sum_{S\in \mathcal{B}_i^j(G)} \prod_{t=1}^{j} \beta(C_t^S,L)&\le&
 \sum_{j=\tau}^{n-i+2-r}k^{j-1} \sum_{e\in E}|\mathcal{E}_{i,e}^j(G)|\alpha(e,L)\nonumber\\
 &\le&k^{n-i+1-r}\sum_{j=\tau}^{n-i+2-r} \sum_{e\in E}|\mathcal{E}_{i,e}^j(G)|\alpha(e,L)\nonumber\\
 &=&k^{n-i+1-r}\sum_{e\in E}\sum_{j=\tau}^{n-i+2-r} |\mathcal{E}_{i,e}^j(G)|\alpha(e,L)\nonumber\\
 &\le&k^{n-i+1-r}\binom{m-1}{i-1}\sum_{e\in E}\alpha(e,L).
 \end{eqnarray}
This proves (\ref{ineBi}).

Finally, if $i=1$ then $\tau=n+1-r=n-i+2-r$ and, therefore,
$$\mathcal{B}_1^{n+1-r}(G)=\mathcal{B}_1(G)= \{\{e\}\colon\,e\in E\}.$$
 Thus, the middle term in (\ref{ineBi}) is reduced to
\begin{equation}\label{redu1}
\sum_{e\in E}  k^{n+1-r}-\sum_{e\in E} \prod_{t=1}^{n+1-r} \beta(C_t^{\{e\}},L).
  \end{equation}
Since $(V,\{e\})$ has exactly one nontrivial component $(V(e),\{e\})$  and $n-r$ trivial components, we obtain
\begin{equation}
\prod_{t=1}^{n+1-r} \beta(C_t^{\{e\}},L)=k^{n-r}\big|\bigcap_{v\in V(e)} L(v)\big|
\end{equation}
Thus, (\ref{redu1}) becomes
$$k^{n-r}\sum_{e\in E}\big(k-\big|\bigcap_{v\in V(e)} L(v)\big|)=k^{n-r}\sum_{e\in E}\alpha(e,L).$$
 This means that the right equality in (\ref{ineBi}) holds when $i=1$, which completes the proof of Lemma \ref{resineBi}.
\end{proof}
\noindent\textbf{Proof of Theorem \ref{main}}. Let
 \begin{equation} \label{inekrep}
k>\frac{m-1}{\ln(1+\sqrt{2})}
\end{equation}
and $L$ be an arbitrary non-constant $k$-list assignment of $G$. It suffices to show that
\begin{equation}
P(G,L)>P(G,k).
\end{equation}
Since $L$ is not constant, the connectedness of $G$ implies that there exists an edge $e^*\in E$ such that $L$ restricted on $e^*$ is not constant.
This implies $|\cap_{v\in V(e^*)}L(v)|<k$, i.e., $\alpha(e^*,L)>0$ by (\ref{defa}).

For simplicity, we write $\alpha=\sum_{e\in E}\alpha(e,L)$ and, for $i$ with $1\le i\le n-r+1$, write
\begin{equation}
f_i= \sum_{j=\tau}^{n-i+2-r}\sum_{S\in \mathcal{B}_i^j(G)}  k^{j}-\sum_{j=\tau}^{n-i+2-r}\sum_{S\in \mathcal{B}_i^j(G)} \prod_{t=1}^{j} \beta(C_t^S,L).
\end{equation}
Then $\alpha>0$ as $\alpha\ge \alpha(e^*,L)$.  By Lemma \ref{resineBi},  $f_1=\alpha k^{n-r}$ and $0\le f_i\le \alpha\binom{m-1}{i-1}k^{n-i+1-r}$ for $2\le i\le n-r+1$. By Theorem \ref{newtrinks} and Theorem \ref{listtrinks},
\begin{eqnarray}\label{diffPlP}
P(G,L)-P(G,k)&=& \sum_{i=1}^{n-r+1}(-1)^{i-1}f_i \nonumber\\
 &\ge &f_1-\sum_{\substack{2\le i\le n-r+1\\   i\text{~even}}}f_i\nonumber\\
  &=&f_1-\sum_{\substack{1\le i\le n-r\\   i\text{~odd}}}f_{i+1}\nonumber\\
 &\ge& \alpha k^{n-r}-\sum_{\substack{1\le i\le n-r\\   i\text{~odd}}}\alpha\binom{m-1}{i}k^{n-i-r}\nonumber\\
  &\ge&\alpha k^{n-r}-\sum_{\substack{1\le i\le n-r\\   i\text{~odd}}}\alpha\frac{(m-1)^{i}}{i!}k^{n-i-r}\nonumber\\
  &=&\alpha k^{n-r}\bigg(1-\sum_{\substack{1\le i\le n-r\\   i\text{~odd}}}\frac{1}{i!}\bigg(\frac{m-1}{k}\bigg)^{i}\bigg)\nonumber\\
  &\ge&\alpha k^{n-r}\bigg(1-\frac{1}{2}\bigg(\exp\bigg(\frac{m-1}{k}\bigg)-\exp\bigg(-\frac{m-1}{k}\bigg)\bigg)\bigg).
  \end{eqnarray}

Consider the function
$$\phi(x)=1-\frac{1}{2}(\exp(x)-\exp(-x)).$$
 Let $x_0=\ln(1+\sqrt{2})$. It is easy to check that $\phi(x)$ is monotone decreasing and  $\phi(x_0)=0$. By (\ref{inekrep}), we have $\frac{m-1}{k}<x_0$ and hence
 $$\phi\left(\frac{m-1}{k}\right)>0.$$
 As $\alpha>0$, it follows from (\ref{diffPlP}) that  $P(G,L)-P(G,k)> 0$.  The proof of Theorem \ref{main} is completed.

\noindent{\bf Remark}. Theorem \ref{main} can also be interpreted as the form of improper colorings of ordinary graphs. For a graph $G$ and two integers $d$ and $k$, a \emph{$d$-improper $k$-coloring} is a mapping: $V(G)\rightarrow \{1,2,\ldots,k\}$ such that the subgraph  induced by any vertex class of the same color has maximum degree at most $d$. Let $P^d(G,k)$ denote the number of all $d$-improper $k$-colorings. The \emph{$d$-improper $k$-list coloring} \cite{Kang,Havet} and the \emph{$d$-improper list coloring function} $P^d_l(G,k)$ are defined analogously.

Regarding every set  of $d+2$ vertices that induces a subgraph of maximum degree $d+1$ as a hyperedge, we get a $(d+2)$-uniform hypergraph, denoted by $G^*$, with vertex set $V(G)$. For any $k$-list assignment $L$, it can be seen that a coloring $f$ is a $d$-improper $L$-coloring of $G$ if and only if it is an  $L$-coloring of $G^*$. Thus, the following result is a parallel consequence of Theorem \ref{main}.
\begin{theorem} Let  $G$ be a simple connected graph,  $d$ a nonnegative integer and let $p$ be the number of the sets  of $d+2$ vertices that induce a subgraph of maximum degree $d+1$. If
\begin{equation} k>\frac{p-1}{\ln(1+\sqrt{2})}\approx 1.135 (p-1)
\end{equation}
then $P_l^d(G,k)=P^d(G,k)$ and the $k$-list assignment permitting the fewest colorings is the constant $k$-list assignment.
\end{theorem}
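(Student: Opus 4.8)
The plan is to reduce the statement to Theorem~\ref{main} through the hypergraph $G^*$ introduced in the Remark. First I would fix the construction precisely: $G^*$ has vertex set $V(G)$, and its edge set consists of all $(d+2)$-subsets of $V(G)$ that induce a subgraph of $G$ of maximum degree $d+1$. By hypothesis this edge set has size exactly $p$, these subsets are pairwise distinct so $G^*$ has no parallel edges, and $G^*$ is $(d+2)$-uniform. Since $r=d+2\ge 2$ for every nonnegative integer $d$, the hypothesis $r\ge 2$ of Theorem~\ref{main} is automatically met.

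The key step is to verify, for every $k$-list assignment $L$, the coloring equivalence asserted in the Remark: a map $f$ with $f(v)\in L(v)$ is a $d$-improper $L$-coloring of $G$ if and only if it is an $L$-coloring of $G^*$. For the ``only if'' direction I would argue contrapositively: if some edge of $G^*$ is monochromatic under $f$, then by definition that $(d+2)$-set induces a subgraph with a vertex of degree $d+1$, and this vertex together with its $d+1$ neighbours lies in a single color class, so $f$ is not $d$-improper. For the ``if'' direction, suppose $f$ is not $d$-improper; then some color class contains a vertex $v$ with at least $d+1$ neighbours in the same class. Selecting $v$ together with exactly $d+1$ of these neighbours yields a $(d+2)$-set in which $v$ has degree $d+1$; since the set has only $d+2$ vertices, its induced maximum degree is exactly $d+1$, so it is an edge of $G^*$, and it is monochromatic. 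This equivalence shows that the number of $d$-improper $L$-colorings of $G$ equals $P(G^*,L)$ for every $L$, whence $P_l^d(G,k)=P_l(G^*,k)$ and $P^d(G,k)=P(G^*,k)$, reducing the theorem to showing $P_l(G^*,k)=P(G^*,k)$ under the stated bound.

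The main obstacle is that Theorem~\ref{main} requires a \emph{connected} hypergraph, whereas $G$ connected does not force $G^*$ connected: a vertex of $G$ of degree less than $d+1$ can fail to lie in any edge of $G^*$ and thus become isolated in $G^*$, and $G^*$ may even split into several nontrivial components. To get around this I would decompose $G^*$ into its nontrivial connected components $H_1,\dots,H_s$ together with its isolated vertices. Since $|L(v)|=k$, each isolated vertex contributes the same factor $k$ to both $P(G^*,L)$ and $P(G^*,k)$, so that $P(G^*,L)=k^{\,a}\prod_{t=1}^{s}P(H_t,L|_{H_t})$, where $a$ is the number of isolated vertices, and likewise for the constant assignment. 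Each $H_t$ is connected with at most $p$ edges, so $k>\frac{p-1}{\ln(1+\sqrt 2)}\ge\frac{|E(H_t)|-1}{\ln(1+\sqrt 2)}$, and Theorem~\ref{main} applies to every $H_t$ to give $P(H_t,L|_{H_t})\ge P(H_t,k)$. Multiplying these inequalities together with the common isolated-vertex factor yields $P(G^*,L)\ge P(G^*,k)$ for every $L$, with the constant assignment attaining equality; this is precisely $P_l^d(G,k)=P^d(G,k)$ with the constant $k$-list assignment among the minimizers. The degenerate case $p=0$ is immediate, since then $G^*$ is trivial, every coloring is counted, and the bound $k>\frac{-1}{\ln(1+\sqrt 2)}$ holds automatically.
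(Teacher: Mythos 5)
Your proposal is correct, and it follows the same route the paper takes: the paper's entire proof is the preceding Remark, which constructs the $(d+2)$-uniform hypergraph $G^*$, asserts (without detail) that $d$-improper $L$-colorings of $G$ are exactly the $L$-colorings of $G^*$, and declares the theorem a ``parallel consequence'' of Theorem \ref{main}. The difference is that you supply two things the paper omits, one of which addresses a genuine gap: Theorem \ref{main} is stated only for \emph{connected} hypergraphs, and connectedness of $G$ does not imply connectedness of $G^*$. (For instance, for $d\ge 2$, take two disjoint stars $K_{1,d+1}$ and join a leaf of one to a leaf of the other by a long path; then $G^*$ has exactly two pairwise disjoint hyperedges, namely the closed neighborhoods of the two centers, plus isolated vertices, so it is disconnected although $G$ is connected.) Your decomposition of $G^*$ into nontrivial components $H_1,\dots,H_s$ and isolated vertices, together with the factorizations $P(G^*,L)=k^{a}\prod_{t}P(H_t,L|_{H_t})$ and $P(G^*,k)=k^{a}\prod_{t}P(H_t,k)$ and the observation $|E(H_t)|\le p$, is exactly what is needed to close this gap; it also leads you to the correctly weakened conclusion that the constant assignment is \emph{among} the minimizers --- indeed, in the disconnected example above, an assignment that is constant on each component of $G^*$ but uses different lists on different components attains the same minimum, so the paper's uniqueness phrasing is not literally correct in that case. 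In short: same approach as the paper, but your version is the complete argument, while the paper's remark-level proof silently applies Theorem \ref{main} to a possibly disconnected hypergraph.
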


 

\begin{thebibliography}{99}\addtolength{\itemsep}{-1ex}
\bibitem{Alon1} N. Alon,  Restricted colorings of graphs, In: Surveys in combinatorics, 1993 (Keele), volume 187 of London Mathematical Society Lecture Note Series, Cambridge University Press, Cambridge, 1993, pp. 1-33.
\bibitem{Alon2} N. Alon, and A. Kostochka, {\it Hypergraph list coloring and Euclidean Ramsey theory}, Random Structures  Algorithms 39(2011),377-390
\bibitem{Benzaken} C Benzaken, S Gravier, and R. \u{S}krekovski, {\it Haj\'{o}s' theorem for list colorings of hypergraphs}, Discuss. Math. Graph Theory 23 (2003 ), 207-213
\bibitem{Dohmen01} K. Dohmen, and M. Trinks, {\it An abstraction of Whitney's broken circuit theorem}, Electron. J. Combin. 21(2014), \#P4.32.

\bibitem{donner1992}  Q. Donner, {\it On the number of list-colorings}, J. Graph Theory 16(1992), 239-245.
\bibitem{erdos1979} P. Erd\H{o}s,  A.L. Rubin, and H. Taylor, {\it Choosability in graphs}, Cong. Numer. 26(1979),  125-157.
\bibitem{Havet} F. Havet and J.S. Sereni, {\it Improper choosability of graphs and maximum average degree}, J Graph Theory 52(2006), 181-199.
\bibitem{Haxell} P. Haxell, and J. Verstraete, {\it List coloring hypergraphs}, Electron. J. Combin. 17 (2010), \#R129.

\bibitem{Kahn} J. Kahn, {\it Asymptotically good list-colorings}, J. Combin. Theory, Ser. A 73 (1996), 1-59.

\bibitem{Kang} R.J. Kang, {\it Improper choosability and property B}, J. Graph Theory 73(2013), 342-353.
\bibitem{Kratochvil} J. Kratochv\'{\i}l, Zs. Tuza, and M. Voigt,  New trends in the theory of graph colorings:
choosability and list coloring, In Contemporary Trends in Discrete Mathematics (1999), 183-197.

\bibitem{kuang2010}  J. Kuang, Applied Inequalities, 4th. ed., Shandong Science and Technology Press, 2010.

\bibitem{kostochka1990} A.V. Kostochka, and A.F. Sidorenko, {\it Problems proposed at the problem session of the Prachatice conference on graph theory}, Ann. Discrete Math. 51 (1992), 380.

\bibitem{Ramamurthi} R. Ramamurthi, and D.B. West, {\it Hypergraph extension of the Alon-Tarsi list coloring theorem}, Combinatorica 25(2005), 355-366

\bibitem{Saxton1} D. Saxton, and A. Thomason, {\it Hypergraph containers}, Invent. Math. 201 (2015), 925-992.
\bibitem{Saxton2} D. Saxton, and A. Thomason, {\it List colourings of regular hypergraphs}, Combin. Probab. Comput. 21(2012), 315-322

\bibitem{thomassen2009} C. Thomassen, {\it The chromatic polynomial and list colorings}, J. Combin. Theory, Ser. B 99(2009), 474-479.
\bibitem{trinks2014} M. Trinks, {\it A note on a Broken-cycle Theorem for hypergraphs}, Discuss. Math. Graph Theory  34(2014), 641-646.
\bibitem{Tuza} Zs. Tuza, {\it Graph colorings with local constraints-a survey}, Discuss. Math. Graph Theory 17 (1997), 161-228.

\bibitem{vizing1976} V.G. Vizing, {\it Coloring the vertices of a graph in prescribed colors}, Diskret Analiz 29 (1976), 3-10.
\bibitem{wangqianyan2017}W. Wang, J. Qian, and Z. Yan, {\it When does the list-coloring function of a graph equal its chromatic polynomial}, J. Combin. Theory Ser. B 122(2017), 543-549.



\end{thebibliography}
\end{document}